\newtheorem{theorem}{Theorem}[section]
\newtheorem{lemma}[theorem]{Lemma}
\newtheorem{proposition}[theorem]{Proposition}
\newtheorem{remark}[theorem]{Remark}
\newcommand{\proof}{\noindent{\bf Proof.\ }}
\newcommand{\qed}{\hfill $\square$ \bigskip}
\newcommand{\GG}{G\overline{G}}
\DeclareMathOperator {\c_m} {c_m}
\DeclareMathOperator {\h_m} {h_m}
\DeclareMathOperator {\m} {m}
\DeclareMathOperator {\diam} {diam}
\let\deg\relax
\DeclareMathOperator {\deg} {deg}
\begin{document}

\title{On the monophonic convexity in complementary prisms}	
		
\author{
	Neethu P. K.$^1$
	\and
	Ullas Chandran S. V.$^1$
        \and
        Julliano R. Nascimento$^2$
	}

\date{\today}

\maketitle

\begin{center}
    $^1$Department of Mathematics, Mahatma Gandhi College, University of Kerala, Kesavadasapuram,  Thiruvananthapuram-695004, Kerala, India \\
	{\tt p.kneethu.pk@gmail.com}, {\tt svuc.math@gmail.com} \\
    $^2$Instituto de Informática, Universidade Federal de Goiás, Goiânia, GO, Brazil
        {\tt jullianonascimento@inf.ufg.br}
	\medskip

\end{center}

\begin{abstract}
A set $S$ of vertices of a graph $G$ is \emph{monophonic convex} if $S$ contains all the vertices belonging to any induced path connecting two vertices of $S$. The cardinality of a maximum proper monophonic convex set of $G$ is called the \emph{monophonic convexity number} of $G$. 
The \emph{monophonic interval} of a set $S$ of vertices of $G$ is the set $S$ together with every vertex belonging to any induced path connecting two vertices of $S$. The cardinality of a minimum set $S \subseteq V(G)$ whose monophonic interval is $V(G)$ is called the \emph{monophonic number} of $G$.
The \emph{monophonic convex hull} of a set $S$ of vertices of $G$ is the smallest monophonic convex set containing $S$ in $G$. The cardinality of a minimum set $S \subseteq V(G)$ whose monophonic convex hull is $V(G)$ is called the \emph{monophonic hull number} of $G$.
The \emph{complementary prism} $\GG$ of $G$ is obtained from the disjoint union of $G$ and its complement $\overline{G}$ by adding the edges of a perfect matching between them. In this work, we determine the monophonic convexity number, the monophonic number, and the monophonic hull number of the complementary prisms of all graphs. 
\end{abstract}
\noindent {\bf Key words:} Complementary prism; monophonic path; convex set; convexity number; monophonic number; hull number.

\medskip\noindent

{\bf AMS Subj.\ Class:} 05C12; 05C69.

\section{Introduction}
\label{sec:intro}

A \emph{finite convexity space} is a pair $(V,{\cal C})$ consisting of a non-empty finite set $V$ and a collection ${\cal C}$ of subsets of $V$ such that $\emptyset \in {\cal C}$, $V\in {\cal C}$ and ${\cal C}$ is closed under intersections. The elements of $\mathcal{C}$ are called \emph{convex sets}, see \cite{van-1993}. Different convexities associated with the vertex set of a graph are well-known. The most natural convexities in graphs are path convexities defined by a family of paths $\mathcal{P}$, in a way that a set $S$ of vertices of $G$ is  $\mathcal{P}$-\emph{convex} if $S$ contains all vertices of every path of $\mathcal{P}$ between vertices of $S$. An extensive survey of different types of path convexities can be found in \cite{pelayo-2015}. The well-known \emph{geodetic convexity} has $\mathcal{P}$ as the family of all shortest paths \cite{caceres-2005, buckley-1990, farber-1986}.
 
In this paper, we consider the \emph{monophonic convexity} in graphs. In this convexity, $\cal P$ is based on the family of induced paths. A \emph{chord} of a path $P: u_0,u_1,\dots,u_n$ is an edge $u_iu_j$, with $j \geq i + 2$. A $u,v$-path $P$ is  \emph{monophonic} (\emph{$m$-path}, for short) or \emph{induced} if it is a chordless path. A set of vertices $S$ of a graph $G$ is a \emph{monophonic convex set} ($m$-\emph{convex set} for short) if $S$ contains all vertices belonging to any induced path connecting two vertices of $S$. The cardinality of a maximum proper convex set of $G$ is called the \emph{monophonic convexity number} ($m$-\emph{convexity number} for short) of $G$, denoted by $\c_m(G)$.

The {\em monophonic interval} $J_G[u,v]$ between $u$ and $v$ is the set of vertices of all $u,v$-monophonic paths in $G$. For $S\subseteq V(G)$ we fix $J_G[S]=\bigcup_{_{u,v\in S}}J_G[u,v]$. Thus, a set $S$ is \emph{monophonic convex} if $J_G[S]=S$.
A set $S\subseteq V(G)$ is called a \emph{monophonic set} ($m$-set, for short) if $J_G[S] = V(G)$. The \emph{monophonic number} of $G$, denoted by $\m(G)$, is the cardinality of a minimum monophonic set of $G$.

The \emph{$m$-convex hull} $[S]_G$ of a set $S$ in $G$ is the smallest convex set containing $S$ in $G$. The convex hull $[S]_G$ can be formed from the sequence $J^{p}_{G}[S]$, where $p$ is a non-negative integer, $J^0_{G}[S]=S$, $J^{1}_{G}[S]=J_G[S]$, and $J^{p}_{G}[S]=J_{G}[J_{G}^{p-1}[S]]$, for $p\geq 2$.  
A set $S \subseteq V(G)$ is called an \emph{$m$-hull set} if $[S]_G = V(G)$. The  \emph{$m$-hull number} of $G$, denoted by $\h_m(G)$, is the cardinality of a minimum $m$-hull set of $G$.

The monophonic convexity was introduced in 1982, by Jamison~\cite{jamison-1982}. For additional topics of interest concerning to the monophonic convexity we refer to the reader \cite{caceres-2005, dourado-2010, dragan-1999, duchet-1988, eurinardo-2015}. In particular, Dourado, Protti and Szwarcfiter~\cite{dourado-2010} proved that the decision problems associated with the monophonic convexity number and the monophonic number are NP-complete on arbitrary graphs. On the other hand, they prove that the monophonic hull number of an arbitrary graph can be computed in polynomial time.

 If $G$ is a graph and $\overline{G}$ its complement, then the {\em complementary prism} $G\overline{G}$ of $G$ is the graph formed from the disjoint union of $G$ and $\overline{G}$ by adding the edges of a perfect matching between the corresponding vertices of $G$ and $\overline{G}$, see ~\cite{haynes-2007,haynes-2009}. For example,  $C_5\overline{C}_5$ is the Petersen graph. The geodesic convexity number of complementary prisms has been investigated recently in {\rm\cite{castonguay-2019}}. In {\rm\cite{castonguay-2019}}, the authors proved that the decision problem associated to the convexity number is NP-complete even restricted to complementary prisms $\GG$ when both $G$ and its complement $\overline{G}$ are connected. In the same work, the authors found the geodetic convexity number of $G\overline{G}$ when $G$ or $\overline{G}$ is disconnected. In \cite{neethu-2022}, the authors found a formula for the geodesic convexity number of complementary prisms of all trees, a rich subclass of the above-stated class. Other related topics studied recently on complementary prisms include geodetic hull number \cite{castonguay-2021, coelho-2022}, general position number \cite{neethu-2020}, and domination \cite{haynes-2009}. To our knowledge, there is no literature on the monophonic convexity number of the complementary prisms. In this paper, we derive formulas for the monophonic convexity number, monophonic number, and monophonic hull number of the complementary prisms of all graphs. In Section~\ref{sec:preliminary} we fix the notations, terminologies and present some initial remarks. In Sections~\ref{convexity},~\ref{monophonic}, and~\ref{hull} we discuss our results on the monophonic convexity number, monophonic number, and monophonic hull number, respectively.
		
\section{Preliminaries}
\label{sec:preliminary}

Graphs in this paper are finite, simple, and undirected. For basic graph terminologies, we follow \cite{chartrand-2006}. Let $G = (V(G), E(G))$ be a graph. For a vertex $v\in V(G)$, the \emph {open neighborhood} $N_G(v)$
is the set of neighbors of $v$, while the \emph {closed neighborhood} $N_G[v]=N_G(v)\cup\{v\}$.  The
\emph{degree} of $v$ is $deg_G(v)=|N_G(v)|$. A vertex $v$ is \emph{simplicial} if $N_G(v)$ induces a clique.  For $S\subseteq V(G)$, we fix $N_G[S] = \bigcup _{v\in S} N_G[v]$; and $N_G(S) = (N_G[S])\backslash S$. A vertex $v \in V(G)$ is \emph{pendant} if $deg_G(v) = 1$. The \emph{distance} between two vertices $u$ and $v$, denoted $d_G(u,v)$, is the length of a shortest path joining them if any; otherwise $d_G(u,v) = \infty$. The \emph{eccentricity} of a vertex $v \in V(G)$, denoted $ecc_G(v)$, is the maximum distance from $v$ to any vertex in $G$. We may simplify the notations above by omitting the index $G$ whenever $G$ is clear from the context. The \emph{diameter} of a graph $G$, denoted $\diam(G)$, is the maximum eccentricity among the vertices of $G$.
The \emph{order} of a graph $G$ is the number of vertices in $G$, denoted by $n(G)$. A connected component $G'$ in a disconnected graph $G$ is called \emph{trivial} if $n(G') = 1$, and \emph{non-trivial}, otherwise.
The \emph{independent number} $\alpha(G)$  is the cardinality of a largest set of nonadjacent vertices of $G$. Whereas
the \emph{clique number}  $\omega(G)$ is the cardinality of a largest clique of $G$. For a set $S\subseteq V(G)$, $S^c$ denotes the compliment of the set $S$ in $G$. i.e., $S^c=V(G)\setminus S$.

If $u\in V(G)\cap V(G\overline{G})$, then the unique neighbour of $u$ in $V(G\overline{G}) \cap V(\overline{G})$ is denoted by $\overline{u}$, called the {\em partner} of $u$ in $\overline{G}$. If $S\subseteq V(G)$, then the set of all partners of the vertices from $S$ is denoted by $\overline{S}$. For a positive integer $k$, we use the notation $[k] = \{1,2, \dots, k\}$.

Some remarks that arise from monophonic convexity definitions can be found in~\cite{dourado-2010}. The next three are useful for our purposes.

\begin{remark}[\cite{dourado-2010}]\label{rmk:simplicial_convexity}
Every graph with a simplicial vertex satisfies $\c_m(G)=n-1$.
\end{remark}

\begin{remark}[\cite{dourado-2010}]\label{rmk:simplicial}
Every monophonic set or $m$-hull set of a graph $G$ must contain all of its simplicial vertices.
\end{remark}

\begin{remark}[\cite{dourado-2010}]\label{rmk:hm_bound}
For any graph $G$, $\h_m(G) \leq \min\{\m(G), \c_m(G)+ 1\}$. 
\end{remark}

Furthermore, when dealing with monophonic intervals on complementary prisms, the next proposition will be useful.

\begin{proposition}\label{prop:monophonic_p4}
Let $G$ be a graph. If $P: u_1,u_2,u_3,u_4$ is an induced path in $G$, then $V(P) \cup V(\overline{P}) \subseteq J_{\GG}[u_1,u_4]$.    
\end{proposition}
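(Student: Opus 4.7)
The plan is to exhibit, for each vertex $v \in V(P) \cup V(\overline{P})$, an induced $u_1$-$u_4$ path in $\GG$ that contains $v$, rather than hunt for a single induced path carrying all eight vertices (which turns out to be too rigid: the matching edges force $\overline{u_1}$ to lie adjacent to $u_1$ on any such path, and $\overline{u_4}$ adjacent to $u_4$, leaving little freedom in between). The first preparatory step is to record the structural facts coming from $P$ being induced in $G$: the non-edges $u_1u_3,\, u_1u_4,\, u_2u_4 \notin E(G)$ translate into the edges $\overline{u_1}\,\overline{u_3},\, \overline{u_1}\,\overline{u_4},\, \overline{u_2}\,\overline{u_4} \in E(\overline{G})$. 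I will also use throughout that the only edges of $\GG$ joining $V(G)$ and $V(\overline{G})$ are the matching edges $u_i\overline{u_i}$.

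With these facts in hand, I would propose four short candidate $u_1$-$u_4$ paths in $\GG$: the path $P$ itself, which stays in $V(G)$ and covers $u_2$ and $u_3$; the path $u_1,\overline{u_1},\overline{u_4},u_4$, which covers $\overline{u_1}$ and $\overline{u_4}$; the path $u_1,u_2,\overline{u_2},\overline{u_4},u_4$, which covers $\overline{u_2}$; and the symmetric path $u_1,\overline{u_1},\overline{u_3},u_3,u_4$, which covers $\overline{u_3}$. Since the union of their vertex sets is exactly $V(P)\cup V(\overline{P})$, it suffices to check that each is indeed an induced path in $\GG$.

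For each candidate the verification has two parts: showing that consecutive vertices are adjacent in $\GG$ (which uses the three displayed non-edges of $G$ together with the matching), and showing the absence of chords. I expect the chord-checking to be the only mild obstacle, though it is routine: any candidate chord is either an edge inside $V(G)$ (ruled out since $P$ is induced), an edge inside $V(\overline{G})$ (ruled out via complementarity applied to the edges $u_1u_2,\, u_2u_3,\, u_3u_4$ of $P$), or a cross-edge---which would have to be a matching edge, but the matching edge at each index $i$ appearing on a path does so between consecutive vertices, so no chord arises this way either. Carrying out this brief case check for the four listed paths completes the proof.
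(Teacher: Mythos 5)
Your proof is correct and takes essentially the same approach as the paper: the paper's proof simply exhibits the two induced paths $u_1,u_2,\overline{u_2},\overline{u_4},u_4$ and $u_1,\overline{u_1},\overline{u_3},u_3,u_4$, which already cover all of $V(P)\cup V(\overline{P})$ and are two of your four candidates. Your extra two paths ($P$ itself and $u_1,\overline{u_1},\overline{u_4},u_4$) are harmless but redundant.
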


\begin{proof}
Given that $u_1,u_2, \overline{u_2}, \overline{u_4}, u_4$ and $u_1, \overline{u_1}, \overline{u_3}, u_3, u_4$ are monophonic paths in $\GG$, the conclusion is direct.
\end{proof}


\section{Monophonic convexity number}
\label{convexity}

In this section, we determine the monophonic convexity number of complementary prisms $\GG$. First, we consider the case when both $G$ and $\overline{G}$ are connected, in Theorem~\ref{thm:convexity_connected}.

\begin{theorem}
\label{thm:convexity_connected}
Let $G$ be a connected graph such that $\overline{G}$ is connected, then $\c_m (\GG)=\max\{\omega(G),\alpha(G)\}$.
\end{theorem}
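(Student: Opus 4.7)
My plan is to prove the equality via matching lower and upper bounds. For the lower bound, take a maximum clique $K$ of $G$, viewed as a subset of $V(\GG)$. Any two $u,v \in K$ are joined by the edge $uv \in E(G) \subseteq E(\GG)$, so every $u,v$-path in $\GG$ of length at least $2$ contains the chord $uv$ and is not induced; hence $J_\GG[K] = K$, and $K$ is proper because $|K| \leq n(G) < n(\GG)$. Applying the same argument to a maximum clique of $\overline G$ (of size $\omega(\overline G) = \alpha(G)$) gives $\c_m(\GG) \geq \max\{\omega(G), \alpha(G)\}$.

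For the upper bound, let $S$ be a proper $m$-convex set of $\GG$, and write $S_G = S \cap V(G)$ and $S_{\overline G} = S \cap V(\overline G)$. If $S_{\overline G} = \emptyset$, then $S_G$ must be a clique of $G$: otherwise two non-adjacent $u,v \in S_G$ would yield the induced $u,v$-path $u, \overline u, \overline v, v$ in $\GG$ (induced since $uv \notin E(G)$ gives $\overline u \overline v \in E(\overline G)$, while the potential chords $u\overline v, uv, \overline u v$ are not edges of $\GG$), forcing $\overline u \in S$, a contradiction. Hence $|S| \leq \omega(G)$, and the case $S_G = \emptyset$ is symmetric, giving $|S| \leq \alpha(G)$. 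The remaining situation is the \emph{main case} where both $S_G, S_{\overline G} \neq \emptyset$; my goal is to show $|S| \leq 2$.

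For the main case, I partition $V(G)$ into $M = \{v : v, \overline v \in S\}$, $A = \{v : v \in S, \overline v \notin S\}$, $B = \{v : v \notin S, \overline v \in S\}$, and $F$ (the rest). A key observation is that for $u \in S_G$ and $\overline w \in S_{\overline G}$ with $u \neq w$, one of the $3$-vertex induced paths $u, w, \overline w$ (when $uw \in E(G)$) or $u, \overline u, \overline w$ (when $uw \notin E(G)$) exists in $\GG$, forcing $u \in M$ or $w \in M$. Applied to $u \in A$ and $\overline w$ for $w \in B$ this contradicts $(A \cup B) \cap M = \emptyset$, so at most one of $A, B$ is nonempty; and $M \neq \emptyset$ since both parts of $S$ are nonempty. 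Three subcases remain. In the subcase $A = B = \emptyset$ (so $S = M \cup \overline M$), I would use induced paths of the forms $\overline u, \overline b, \overline v$, $\overline u, u, b, \overline b, \overline v$, $u, y, \overline y, \overline m$, $u, x, v$, $u, x, \overline x, \overline v$, and $u, a, b, \overline b, \overline v$ to deduce, step by step, that $|M| \geq 2$ either forces $\overline M$ to become a union of components of $\overline G$ (contradicting that $\overline G$ is connected) or forces $V(G) \setminus M$ to split into classes $A_T, B_T$ of vertices $G$-adjacent to all, respectively none, of $M$, with no $G$-edges between them, disconnecting $G$; hence $|M| = 1$ and $|S| = 2$. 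In the subcase $A \neq \emptyset$, $B = \emptyset$, the paths $a, \overline a, \overline m$, $a, \overline a, \overline x, \overline m$ and $a, x, \overline x, \overline m$ force $A \times M \subseteq E(G)$ and $V(G) \setminus M \subseteq N_G(m)$ for every $m \in M$, which makes $\overline M$ a component-cut of $\overline G$, contradiction; the subcase $B \neq \emptyset$, $A = \emptyset$ is symmetric via $G \leftrightarrow \overline G$. Since both $G$ and $\overline G$ are connected on at least two vertices, $\omega(G), \alpha(G) \geq 2$, so $|S| \leq 2 \leq \max\{\omega(G), \alpha(G)\}$, completing the proof. The main obstacle is this subcase analysis: Proposition~\ref{prop:monophonic_p4} supplies only one shape of induced path, so one has to assemble induced paths of several different shapes crossing the matching, verify each via direct chord inspection, and combine them to convert the structural constraints into a disconnection of $G$ or $\overline G$.
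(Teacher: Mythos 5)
Your proposal is correct, and while its outer skeleton (lower bound from cliques of $G$ and of $\overline{G}$, upper bound by cases on whether $S_G$ or $S_{\overline{G}}$ is empty) matches the paper, the treatment of the main case is genuinely different. The paper first proves $S_{\overline{G}}=\overline{S_G}$ (its Claim~1) and then that $S_G$ is a clique or an independent set (its Claim~2, via an analysis of common neighbours of an adjacent pair in $S_G$), finishing with the remark that an independent set of size at least $2$ in a connected graph is not $m$-convex. You instead partition $V(G)$ by membership pattern into $M,A,B,F$, kill the case $A\neq\emptyset\neq B$ with the short ``one of $u,w$ lands in $M$'' observation, and in every surviving subcase convert the adjacency constraints (every vertex outside $M$ adjacent to all or none of $M$; no edges between the two classes) into a disconnection of $G$ or of $\overline{G}$ --- never passing through ``clique or independent set.'' I checked the induced paths you list ($u,w,\overline{w},\overline{v}$ for the all-or-none dichotomy, $u,a,b,\overline{b},\overline{v}$ for the absence of $A_T$--$B_T$ edges, $a,\overline{a},\overline{m}$ and its variants for the mixed subcases) and each is chordless in the stated configuration, so the plan goes through. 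Your route buys three things: it makes the lower bound explicit (the paper leaves it implicit that a maximum clique of $G$ or of $\overline{G}$ is $m$-convex and proper in $\GG$); it supplies the existence of a vertex $v$ with $v,\overline{v}\in S$, which the paper's Claim~1 quietly assumes when it ``chooses $v\in S_G$ with $\overline{v}\in S_{\overline{G}}$''; and it treats the mixed subcases ($A\neq\emptyset$ or $B\neq\emptyset$) head-on rather than folding them into a symmetry argument. The paper's route, in exchange, yields the structural statement ``$S_G$ is a clique or independent set,'' which is conceptually reused in the disconnected case (Theorem~\ref{thm:convexity_disconnected}). The one caveat is that your hardest subcase is written as a plan (``I would use induced paths of the forms\dots'') rather than a full verification, so to be complete you would still need to write out the chord checks and the final disconnection argument in each branch.
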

\proof Let $S$ be a monophonic convex set of $\GG$ with $|S|=\c_m (\GG)$ and fix $S_G=S\cap V(G)$ and $S_{\overline{G}}=S\cap V(\overline{G})$. Since $n(G)\geq 2$, we have that $|S|\geq 2$. Consider the following three cases. \\
\textbf{Case 1 :}
$S_G\neq \emptyset$ and $S_{\overline{G}}\neq \emptyset$. In this case, we prove that $|S|=2$. 

 First, we prove the following claim.\\
\textbf{Claim 1:} $S_{\overline{G}}=\overline{S_G}$. Let $u\in S_G$. Choose $v\in S_G$ such that $\overline{v}\in S_{\overline{G}}$. Let $x\in S_G$.  Let $\overline{P}$ be a $\overline{v},\overline{x}$-monophonic path in $\overline{G}$. Then $\overline{P}$ together with the edge $\overline{x}x$ is a $x,\overline{v}$-monophonic path containing $\overline{x}$ in $\GG$. So $\overline{x}\in S$. Hence $\overline{S_G}\subseteq S_{\overline{G}}$. On the other hand, let $\overline{y}\in S_{\overline{G}}$ and $P$ be a $u,y$-monophonic path in $G$. Then $P$ together with the edge $y\overline{y}$ is a $u,\overline{y}$--monophonic path containing $y$ in $\GG$. Hence $y\in S_G$. This shows that $S_{\overline{G}}\subseteq \overline{S_G}$.\\
\textbf{Claim 2:} $S_G$ is a clique or an independent set in $G$. If $|S_G|\leq 2$, then there is nothing to prove. So assume that $|S_G|\geq 3$. Suppose there exists $u,v\in S_G$ with $u$ and $v$ adjacent in $G$. Let $w$ be any vertex distinct from $u$ and $v$ in $G$. Suppose $w$ is nonadjacent to both $u$ and $v$. Then $\overline{u},\overline{w},\overline{v}$ is an monophonic path in $\GG$ containing $\overline{w}$. And hence by Claim 1, $w,\overline{w}\in S$. Also if $w$ is adjacent to exactly one of $u$ or $v$, say $u$, then $u,w,\overline{w},\overline{v}$ is an $u,\overline{v}$-monophonic path in $\GG$ containing both $w$ and $\overline{w}$. Hence $w,\overline{w}\in S$. The above observations show that
\begin{equation}
\label{neighbor}
 [N_G(u)\cap N_G(v)]^c\subseteq S_G.   
\end{equation}
 Moreover $N_G(u)\cap N_G(v)\neq \emptyset$, since $S\neq V(\GG)$. Let $w\in N_G(u)\cap N_G(v)$. We first show that any vertex $z\in S_G$ is adjacent to both $u$ and $v$. This in turn implies that $S_G$ is a clique, since $u$ and $v$ are arbitrary adjacent vertices in $S_G$. Suppose $z$ is non adjacent to $u$. First consider the case $zw\notin E(G)$. Then $v,w,\overline{w},\overline{z}$ is an $m$-path connecting $v$ and $\overline{z}$. This shows that $V(\GG)= S$. This is not possible as $S$ is a proper subset of $G$. Thus $zw\in E(G)$. In this case $z,w,u,\overline{u}$ is an $m$-path connecting $z$ and $\overline{u}$ containing $w$, and so $w\in S_G$. This again shows that $S=V(\GG)$. Which is not possible. Thus $uz\in E(G)$ and $vz\in E(G)$. Hence $S_G$ must be a clique in $G$ and $S_{\overline{G}}$ is an independent set in $\overline{G}$.\\
Now, if $S_G$ contains two nonadjacent vertices $u$ and $v$, then replacing $G$ by $\overline{G}$ in the above arguments we get that $S_{\overline{G}}$ is a clique. \\
Note that an independent set in a connected graph is $m$-convex only if it contains exactly one vertex. Thus in this case $|S|=2$.
   \\
 \textbf{Case 2:} $S_{\overline{G}}=\emptyset$. Let $u,v\in S_G$. If $uv\notin E(G)$, then $u,\overline{u},\overline{v},v$ is a monophonic path in $\GG$. Since $S_{\overline{G}}=\emptyset$, $S_G$ must be a clique in $G$. Hence $|S|\leq \omega(G)$.\\
\textbf{Case 3:} $S_{G}=\emptyset$. In this case,  replacing $G$ by $\overline{G}$ in Case 1, we get that $|S|\leq \omega(\overline{G})=\alpha(G)$. 
 \qed
 
 Next, we find the monophonic convexity number of disconnected graphs.
\begin{theorem}
\label{thm:convexity_disconnected}
Let $G$ be a disconnected graph with components $\{G_i\}_{i\in[r]}$, and $k$ be the order of a minimum component of $G$. Then
 $$\c_m (\GG)=
\begin{cases}
 2n(G)-1 \hspace{2cm}; $ if $k=1 \,,\\ \max\{\omega(G),\alpha(G)+2- (\min_{i\in [r]}\{\alpha(G_i)\})\hspace{0.3cm}; $ otherwise$\,. 
\end{cases}$$

\end{theorem}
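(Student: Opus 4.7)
The plan is to split the argument on whether $G$ has an isolated vertex, i.e., whether $k=1$ or $k\ge 2$. The case $k=1$ is immediate: an isolated vertex $u\in V(G)$ has the partner $\overline{u}$ as its unique neighbour in $\GG$, so $u$ is pendant and therefore simplicial in $\GG$, and Remark~\ref{rmk:simplicial_convexity} yields $\c_m(\GG)=n(\GG)-1=2n(G)-1$.

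Assume now $k\ge 2$, so that every component of $G$ has order at least two and $\overline{G}$ is connected. For the lower bound I would exhibit two $m$-convex sets matching the two terms of the maximum. Any maximum clique $K$ of $G$ is $m$-convex in $\GG$ because, for $u,v\in K$, the edge $uv$ is the unique induced $u,v$-path in $\GG$; this gives $\c_m(\GG)\ge \omega(G)$. For the second term I would fix a component $G_j$ realising $\min_{i\in[r]}\alpha(G_i)$, pick a maximum independent set $I_i$ of each $G_i$ with $i\ne j$, and set $I=\bigcup_{i\ne j}I_i$; then $\overline{I}\subseteq V(\overline{G})$ is a clique of $\overline{G}$ of size $\alpha(G)-\min_{i\in[r]}\alpha(G_i)$, and I would enlarge it by two carefully chosen vertices drawn from $V(G_j)\cup\overline{V(G_j)}$ (a matched pair $\{w,\overline{w}\}$ with $w\in V(G_j)$ being a natural candidate), verifying convexity via Proposition~\ref{prop:monophonic_p4} together with the cliqueness of $\overline{I}$.

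For the upper bound let $S$ be a maximum proper $m$-convex set, and write $S_G=S\cap V(G)$ and $S_{\overline{G}}=S\cap V(\overline{G})$. If $S_{\overline{G}}=\emptyset$, then for non-adjacent $u,v\in S_G$ we have $uv\notin E(G)$, so $\overline{u}\overline{v}\in E(\overline{G})$, and the path $u,\overline{u},\overline{v},v$ is induced in $\GG$, pulling $\overline{u},\overline{v}$ into $S$, a contradiction. Thus $S_G$ is a clique and $|S|\le\omega(G)$. The symmetric argument using the induced path $\overline{u},u,v,\overline{v}$ handles $S_G=\emptyset$ and yields $|S|\le\omega(\overline{G})=\alpha(G)$. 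The substantial case is $S_G\ne\emptyset\ne S_{\overline{G}}$: unlike Theorem~\ref{thm:convexity_connected}, whose Claim~1 depended on the existence of $u,y$-monophonic paths inside $G$, two vertices of $S_G$ placed in distinct components of $G$ cannot be joined by any $G$-internal path, and I plan to exploit precisely this obstruction, together with Proposition~\ref{prop:monophonic_p4}, to constrain $S$ tightly enough to conclude $|S|\le\alpha(G)+2-\min_{i\in[r]}\alpha(G_i)$.

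The main obstacle is this third subcase. The delicate step will be a tight component-by-component accounting showing that the $\overline{G}$-part of $S$ is essentially forced to consist of the partners of an independent set avoiding one designated small component $G_j$, while the $G$-part contributes at most two additional vertices before convexity expands $S$ to all of $V(\GG)$; the $-\min_{i\in[r]}\alpha(G_i)$ correction then arises naturally from the freedom in choosing $G_j$ as the minimiser.
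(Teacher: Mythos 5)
Your treatment of the case $k=1$ and of the two easy subcases of the upper bound ($S_{\overline{G}}=\emptyset$ and $S_G=\emptyset$) coincides with the paper's and is fine. The genuine gap is the third subcase $S_G\neq\emptyset\neq S_{\overline{G}}$, which is where essentially all of the work in the paper's proof lives and which you only announce as a plan. The paper gets the bound $|S|\le\alpha(G)+2-\min_{i\in[r]}\alpha(G_i)$ through a chain of structural claims, none of which appear in your write-up: (i) if a component $G_j$ meets $S_G$ and $\overline{G_j}$ meets $S_{\overline{G}}$, then partners propagate inside that component, i.e.\ $\overline{S_G\cap G_j}=S_{\overline{G}}\cap\overline{G_j}$; (ii) for such a component one of the two traces $S_G\cap G_j$, $S_{\overline{G}}\cap\overline{G_j}$ must be a singleton, on pain of $S$ absorbing all of $V(\overline{G})$ and then all of $V(G)$; (iii) combining (i) and (ii), $|S_G|=1$ --- two vertices of $S_G$ in one component contradict (i)--(ii), while two in different components $G_1,G_2$ force all of $G_2\cup\overline{G_2}$ into $S$ via paths $u_1,\overline{u_1},\overline{x},x,\dots,u_2$; and (iv) $S_{\overline{G}}$ is a clique of $\overline{G}$. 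The observation you propose to "exploit" --- that vertices in distinct components of $G$ admit no connecting path inside $G$ --- is true but by itself yields nothing close to the stated bound; the component-by-component accounting you defer \emph{is} the proof, not a routine afterthought.

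A second, more localized problem is the lower bound. Your candidate set (a matched pair $\{w,\overline{w}\}$ with $w\in V(G_j)$ together with the clique $\overline{I}$ restricted to the other components) is the paper's set $H$, but "verifying convexity via Proposition~\ref{prop:monophonic_p4} together with the cliqueness of $\overline{I}$" cannot work as stated: Proposition~\ref{prop:monophonic_p4} only produces \emph{inclusions} of vertices into monophonic intervals, i.e.\ it is a tool for showing sets are \emph{not} convex, never for certifying that they are. To certify convexity of $H$ one must exclude all induced $u,\overline{x}$-paths other than $u,\overline{u},\overline{x}$ for $\overline{x}\in\overline{I}\setminus V(\overline{G_j})$ --- in particular paths of the form $u,w,\overline{w},\overline{x}$ with $w\in N_{G_j}(u)$, which are chordless whenever $w$ and $x$ lie in different components --- and this verification is precisely the delicate step you have skipped.
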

\proof If $k=1$, then the single vertex $v$ in a minimum component of $G$ will be a simplicial vertex of $\GG$. Hence $V(\GG)\setminus \{v\}$ is a convex set of $\GG$ and so $\c_m (\GG)=2n(G)-1$. Now suppose that $k\geq 2$. Let $S$ be a monophonic convex set of $\GG$ with $|S|=\c_m (\GG)$. Fix $S_G=S\cap V(G)$ and $S_{\overline{G}}=S\cap V(\overline{G})$. Since $k\geq 2$, we have $|S|\geq 2$. Consider the following cases. \\
\textbf{Case 1:} $S_{\overline{G}}=\emptyset$. Let $u,v\in S_G$. If $uv\notin E(G)$, then $u,\overline{u},\overline{v},v$ is a monophonic path in $\GG$. This shows that $\overline{u},\overline{v}\in S$, which is impossible. Thus $S_G$ must be a clique in $G$. Hence $|S|\leq \omega(G)$. \\
\textbf{Case 2:} $S_{G}=\emptyset$. In this case, replacing $G$ by $\overline{G}$ in Case 1, we get $|S|\leq \alpha(G)$.\\
\textbf{Case 3:} $S_G \neq \emptyset$ and $S_{\overline{G}}  \neq \emptyset$. Let $u\in S_G$ and $\overline{v}\in S_{\overline{G}}$. We first prove the following  two claims. \\
\textbf{Claim 1:} 
If $u$ and $v$ are in the same component of $G$,  say in $G_1$, then \begin{equation}
\label{eqn1}
    \overline{S_G\cap G_1}=S_{\overline{G}}\cap \overline{G_1}.
\end{equation} Let $x\in S_G\cap G_1$.  Let $\overline{P}$ be a $\overline{v},\overline{x}$-monophonic path in $\overline{G}$. Then $\overline{P}$ together with the edge $\overline{x}x$ is a $x,\overline{v}$-monophonic path containing $\overline{x}$ in $\GG$. Hence $\overline{x}\in S$ and so $\overline{S_G\cap G_1}\subseteq S_{\overline{G}}\cap \overline{G_1}$. On the other hand, let $\overline{y}\in S_{\overline{G}}\cap \overline{G_1}$ and let $P$ be a $u,y$-monophonic path in $G_1$. Then $P$ together with the edge $y\overline{y}$ is a $u,\overline{y}$-monophonic path containing $y$ in $\GG$. Hence $y\in S_G$. This shows that $S_{\overline{G}}\cap \overline{G_1}\subseteq \overline{S_G\cap G_1}$.\\
\textbf{Claim 2:} For any component, $G_i$ of $G$, with $S_{G}\cap G_i\neq \emptyset$ and $S_{\overline{G}}\cap \overline{G_i}\neq \emptyset$,
\begin{equation}
    \label{eqn2}
    {\rm either\ } |S_G\cap G_i |= 1 {\rm \ or\ } |S_{\overline{G_i}}\cap \overline{G_i} |= 1. 
\end{equation}
Suppose that there exist distinct vertices $x_1, x_2$ in $S_G\cap G_i$ and distinct vertices $\overline{y_1}$ and $\overline{y_2}$ in $S_{\overline{G}}\cap \overline{G_i}$. Without loss of generality, we may assume that $i=1$; and also we may assume that $x_1\neq y_1$. If $x_1$ and $y_1$ are adjacent vertices in $G$, then $V(\overline{G})\setminus V(\overline{G_1})\subseteq S_{\overline{G}}$. Since $\overline{x_1},\overline{y_1}\in S_{\overline{G}}$ by Claim 1 and so $V(\overline{G})\setminus V(\overline{G_1})\subseteq J_{\overline{G}}[\overline{x_1},\overline{y_1}]$. Since the order of any component is greater than 2, we get $V(\overline{G})\subseteq S$. Now recall that for any pair of adjacent vertices $z$ and $w$ in $V(G_i)$, with $i\in[r]$, $\overline{z},z,w,\overline{w}$ is an $m$-path. Since for any $z\in V(G_i)$, $\deg_{G}(z)\geq 1$, for any $i\in[r]$, $V(G_i)\subseteq S$ as $V(\overline{G_i})\subseteq S$. Hence $V(\GG)\subseteq S$. This is not possible, as $S$ is a proper subset of $\GG$. Hence $S_G\cap G_1$ must be independent. But then $S$ cannot be convex, since $G_1$ is connected. Hence the claim follows.
  
  In the following, we prove that $|S_G|=1$. Assume the contrary, that there exist two distinct vertices $u_1,u_2\in S_G$. Suppose first that $u_1$ and $u_2$ are in the same component of $G$, say in $G_1$. Then by Claim 1 and Claim 2, we have that $S_{\overline{G}}\cap V(\overline{G_1})=\emptyset$. Choose $\overline{x}\in S_{\overline{G}}$. Then $\overline{x}\notin \overline{G_1}$ and $u_1,\overline{u_1},\overline{x}$ is a monophonic path in $\GG$. Thus this case is not possible as $S_{\overline{G}}\cap \overline{G_1}= \emptyset$. Thus $u_1$ and $u_2$ must be from different components, say $u_1\in G_1$ and $u_2\in G_2$. Let $x$ be any vertex in $G_2$ distinct from $u_2$ and let $P$ be a monophonic path connecting $x$ and $u_2$. Then the path $u_1,\overline{u_1},\overline{x},x$ together with the path $P$ is a monophonic path connecting $u_1$ and $u_2$ containing $x$ and $\overline{x}$. Hence $x,\overline{x}\in S$. Also by Claim 1, $\overline{u_2}\in S$. This contradicts Claim 2. Thus $|S_G|= 1$.\\
  \textbf{Claim 3:} $S_{\overline{G}}$ is a clique.  If possible suppose that $\overline{x},\overline{y} \in S_{\overline{G}}$, for some $i\in [r]$ with  $x$ and $y$ adjacent in $G$. Then the path $\overline{x},x,y,\overline{y}$ is a monophonic path in $\GG$. Thus $x,y\in S_G$. This is impossible by Claims 1 and Claim 2. Thus $S_{\overline{G}}$ is a clique.

 Let $ S_G=\{u\}$ and without loss of generality we may assume that $u\in V(G_1)$. Also, let $l=\min_{j\in[r]}\{\alpha(G_j)\}$. Then by Claim 1 and Claim 2, $|S_{\overline{G}}\cap \overline{G_i}|= 1$. Thus $|S|\leq \alpha(G)+2-(\alpha(G_i))$ . This shows that $|S|\leq \alpha(G)+2-l$.

 Finally, we show that $\GG$ contains a $m$-convex set of order $\alpha(G)+2- l$. 
For that, let $I$ be an independent set in $G$ with $|I|=\alpha(G)$. Choose a vertex $u$ in $I$ such that  $u\in V(G_i)$ and $\alpha(G_i)=l$. Consider the set $H=\{u,\overline{u}\} \cup C$, where $C=\overline{I} \setminus V(\overline{G_i}) $. Then $|H|=\alpha(G)+2-l$. We claim that $H$ is a $m$-convex set of $\GG$. We remark that $\overline{I}$ is $m$-convex since it is a clique in $\GG$. Now let $\overline{x}$ be a vertex in $\overline{I}$ which is distinct from $\overline{u}$. Then $x$ and $u$ are in different components of $G$ and $u,\overline{u},\overline{x}$ is the unique $m$-path connecting $u$ and $\overline{x}$. This shows that the set $H$ must be $m$-convex in $\GG$. 
This in turn implies that $\c_m(\GG)=\max\{\omega(G), \alpha(G)+2-l\}$.
\qed

\section{Monophonic number}
\label{monophonic}

In this section, we are concerned with the monophonic number of complementary prisms. As done in Section~\ref{convexity}, the results are separated according to the connectedness of $G$. We begin when $G$ is a disconnected graph.

\begin{theorem}
\label{thm:monophonic_disconnected}
Let $G$ be a disconnected graph with $r \geq 2$ connected components. Then $\m(\GG)= r$.
\end{theorem}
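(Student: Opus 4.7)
The plan is to establish $\m(\GG)=r$ by matching bounds. For the upper bound I will exhibit a monophonic set of size $r$; for the lower bound I will use the partition $V(\GG)=\bigsqcup_{i=1}^{r}\bigl(V(G_i)\cup V(\overline{G_i})\bigr)$ and show that any monophonic set must contribute at least one vertex to each block.

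For $\m(\GG)\le r$, pick $v_i\in V(G_i)$ for each $i\in[r]$ and let $S=\{v_1,\ldots,v_r\}$. For $i\neq j$ the path $v_i,\overline{v_i},\overline{v_j},v_j$ is induced in $\GG$, because $\overline{v_i}\,\overline{v_j}\in E(\overline{G})$ (as $v_iv_j\notin E(G)$, different components) while none of $v_iv_j$, $v_i\overline{v_j}$, $\overline{v_i}v_j$ is an edge of $\GG$; hence every partner $\overline{v_i}$ lies in $J_{\GG}[S]$. For an arbitrary $u\in V(G_i)\setminus\{v_i\}$, fix some $j\neq i$ and take any monophonic $v_i,u$-path $v_i=w_0,w_1,\ldots,w_\ell=u$ inside the connected graph $G_i$; appending $\overline{u},\overline{v_j},v_j$ produces an induced $v_i,v_j$-path of $\GG$ (the only potential chords $w_s\overline{u}$, $w_s\overline{v_j}$, $w_sv_j$, $u\overline{v_j}$, $uv_j$, $\overline{u}v_j$ are immediately seen to be non-edges), so $u,\overline{u}\in J_{\GG}[S]$. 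Therefore $J_{\GG}[S]=V(\GG)$.

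For $\m(\GG)\ge r$, suppose a monophonic set $S$ misses some block; write $I:=V(G_i)\cup V(\overline{G_i})$ with $S\cap I=\emptyset$, and pick $u\in I$. The structural fact I exploit is that every $\GG$-edge between $I$ and $V(\GG)\setminus I$ has the form $\overline{w}\,\overline{v}$ with $v\in V(G_i)$, $w\notin V(G_i)$, and each such $\overline{w}$ is adjacent in $\overline{G}$ to \emph{every} $\overline{v'}$ with $v'\in V(G_i)$ (since $v',w$ lie in different components of $G$). Thus, on any monophonic $a,b$-path $P$ with $a,b\in S$, letting $\overline{v'}$ be the first $V(\overline{G_i})$-vertex on $P$ and $\overline{w'}$ its $P$-predecessor, any additional $V(\overline{G_i})$-vertex on $P$ would form a chord with $\overline{w'}$. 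Hence $|P\cap V(\overline{G_i})|\le 1$.

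I finish by casework on $u$. If $u\in V(G_i)$, the maximal subpath of $P$ through $u$ that lies in $I$ must begin and end in $V(\overline{G_i})$ (the only boundary of $I$), but with at most one such vertex on $P$ the subpath collapses to a single $V(\overline{G_i})$-vertex, which cannot contain $u\in V(G_i)$. If $u=\overline{v}\in V(\overline{G_i})$, then $P\cap I=\{u\}$, and the two $P$-neighbors of $u$ are vertices $\overline{w_1},\overline{w_2}$ with $w_1,w_2\notin V(G_i)$; reapplying the same chord argument to the $P$-predecessors of $\overline{w_1}$ and $\overline{w_2}$ pins them down to be $w_1$ and $w_2$ themselves. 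Then either $w_1w_2\notin E(G)$ and $\overline{w_1}\,\overline{w_2}$ is a chord of $P$, or $w_1w_2\in E(G)$ and $w_1w_2$ is a chord spanning positions two steps away from $u$. Either way $P$ is not induced, so $u\notin J_{\GG}[S]$, contradicting the monophonicity of $S$. The main obstacle is precisely this second subcase, where $P$ only grazes $I$ at the single vertex $u$ and the contradiction has to be coaxed out two steps further along $P$ via the dichotomy on whether $w_1w_2\in E(G)$.
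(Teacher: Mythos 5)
Your overall strategy coincides with the paper's: the same monophonic set $\{v_1,\dots,v_r\}$ for the upper bound, and for the lower bound the same key observation that the block $I=V(G_i)\cup V(\overline{G_i})$ can only be entered through $V(\overline{G_i})$, all of whose vertices are adjacent to the $P$-predecessor of the first such vertex on the path, so a second visit to $V(\overline{G_i})$ forces a chord. Your Case (i) (the vertex $u\in V(G_i)$) is correct and already yields the required contradiction; this is in fact exactly what the paper does, since its Claim~1 only ever argues that a vertex $v\in V(G_i)$ cannot be covered from outside the block.

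Case (ii), however, contains a genuine error. The step ``reapplying the same chord argument to the $P$-predecessors of $\overline{w_1}$ and $\overline{w_2}$ pins them down to be $w_1$ and $w_2$'' silently assumes that $\overline{w_1}$ and $\overline{w_2}$ are internal vertices of $P$. If instead they are the endpoints $a,b\in S$ themselves and $w_1w_2\in E(G)$ (so $w_1,w_2$ lie in a common component $G_j$ with $j\neq i$), then $P:\overline{w_1},\overline{v},\overline{w_2}$ is a chordless path: $\overline{w_1}\,\overline{v}$ and $\overline{w_2}\,\overline{v}$ are edges of $\overline{G}$ because the underlying vertices lie in different components, while $\overline{w_1}\,\overline{w_2}$ is not an edge. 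So the assertion you are trying to establish in Case (ii) --- that no vertex of $V(\overline{G_i})$ lies on a monophonic path between vertices of $S$ outside $I$ --- is simply false; vertices of $V(\overline{G_i})$ \emph{can} be generated from outside the block. Your proof survives only because Case (ii) is superfluous: $V(G_i)\neq\emptyset$, so choosing $u\in V(G_i)$ and invoking Case (i) alone completes the lower bound. Delete Case (ii), or restrict the choice of $u$ to $V(G_i)$ from the outset.
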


\begin{proof}
Let $G_1, G_2, \dots, G_r$ be the components of $G$. 
Consider $S = \{v_1, v_2, \dots, v_r\}$ such that $v_i$ is any vertex of $V(G_i)$, for every $i \in [r]$. We show that $S$ is a monophonic set. 
Since $G_i$ is connected, there is a monophonic path $P: v_i, \dots, u$ in $G_i$, for every $u \in V(G_i)$. ($P$ can be the shortest path between $v_i$ and $u$, for instance). Then, $P$ together with the edges  $u\bar{u}$, $\bar{u}\bar{v}_j$, $\bar{v}_jv_j$ is a   $v_i,v_j$-monophonic path containing $\{u, \overline{u}\}$, for every $u \in V(G_i)$, for every $1 \leq i < j \leq r$. Then, $J_{\GG}[S] = V(\GG)$ and $\m(\GG) \leq r$. 

On the other hand, to show that $r$ is the minimum order of a monophonic set of $\GG$, we prove Claim~1. 

\smallskip
\noindent\textbf{Claim 1:} \emph{If $S$ is a monophonic set of $\GG$, then $V(G_i\overline{G}_i) \cap S \neq \emptyset$, for every $i \in [r]$.}

\smallskip

Let $t \in \{0, 1, \dots, r\}$ be the number of trivial components in $\GG$. If $t \geq 1$ and $1 \leq i \leq t$, the conclusion that $V(G_i\overline{G}_i) \cap S \neq \emptyset$ is direct, since $v_i$ is simplicial. Then, consider that $t + 1 \leq i \leq r$, for $t \in \{ 0, 1, \dots, r-1 \}$.

For a contradiction, suppose that $V(G_i\overline{G}_i) \cap S = \emptyset$, for some $t + 1 \leq i \leq r$. Let $v \in V(G_i)$. Since $S$ is a monophonic set, $v$ lies in a $x, y$-monophonic path $P$ in $\GG$, for $x,y \in V(\GG) \setminus V(G_i\overline{G}_i)$. Since $G_i$ is a nontrivial component and $P$ is an $m$-path, we have that $|V(P)| \geq 6$ as well as $|V(P) \cap V(\overline{G}_i)| \geq 2$. Then, let $P: x, \dots, x', x'', \dots, y'',y', \dots y$ with $x', y' \in V(\overline{G}) \setminus V(\overline{G}_i)$, $x'', y'' \in V(\overline{G}_i)$, and $x'x'', y'y'' \in E(\overline{G})$.
Notice that $x'y''$ is an edge in $\overline{G}$. Since $x'$ and $y''$ are not consecutive vertices in $P$, then $x'y''$ is a chord, a contradiction.

\medskip

By Claim~1, we conclude that $\m(\GG) \geq r$ and the proof is done.
\qed \end{proof}

In the following, we consider both $G$ and $\overline{G}$ be connected graphs. Recall that $n(G) \geq 4$.

\begin{lemma}\label{lemma:monophonic_diam3}
Let $G$ be a connected graph such that $\overline{G}$ is connected. If $\diam(G) \geq 3$, then $\m(\GG)= 2$.
\end{lemma}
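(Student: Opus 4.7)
My plan is to show that any two vertices $u,v\in V(G)$ realizing $d_G(u,v)=\diam(G)\ge 3$ form a monophonic set of $\GG$. Since $|V(\GG)|=2n(G)\ge 8$ (both $G$ and $\overline{G}$ connected forces $n(G)\ge 4$), we trivially have $\m(\GG)\ge 2$, so this will yield $\m(\GG)=2$. Concretely, I would exhibit, for each $z\in V(\GG)\setminus\{u,v\}$, an induced $u,v$-path in $\GG$ containing $z$. The pair $z\in\{\overline{u},\overline{v}\}$ is handled by the induced path $u,\overline{u},\overline{v},v$, using $uv\notin E(G)$ so that $\overline{u}\overline{v}\in E(\overline{G})$.

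For $z\in\{w,\overline{w}\}$ with $w\in V(G)\setminus\{u,v\}$, I would split on the adjacency of $w$ to $u,v$ in $G$. If $uw\in E(G)$, then $wv\notin E(G)$ (otherwise $d_G(u,v)\le 2$), and the path $u,w,\overline{w},\overline{v},v$ is induced; the case $vw\in E(G)$ is symmetric using $u,\overline{u},\overline{w},w,v$. If $uw,vw\notin E(G)$ and $w$ shares a neighbor $x$ with $u$ in $G$, then $x\neq v$ and $xv\notin E(G)$, so $u,x,w,\overline{w},\overline{v},v$ is induced; symmetrically when $w$ shares a neighbor with $v$ in $G$.

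The main obstacle is the remaining sub-case in which $\{u,v,w\}$ is independent in $G$ and $w$ has no common neighbor with either $u$ or $v$, so $d_G(u,w),d_G(v,w)\ge 3$. Fix a shortest $w,v$-path $w=y_0,y_1,\ldots,y_m=v$ in $G$, so $m\ge 3$, and note that $uy_1\notin E(G)$ (else $u,y_1,w$ shows $d_G(u,w)\le 2$) and $uy_{m-1}\notin E(G)$ (else $d_G(u,v)\le 2$). If no $y_j$ with $1\le j\le m-1$ is adjacent to $u$, I would use $u,\overline{u},\overline{w},w,y_1,\ldots,y_m$. Otherwise, taking $i$ minimal with $uy_i\in E(G)$ forces $2\le i\le m-2$ (so in particular $m\ge 4$), and the path $u,y_i,y_{i-1},\ldots,y_1,w,\overline{w},\overline{v},v$ is the right one; either way both $w$ and $\overline{w}$ lie on the constructed path.

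The technical work is checking that the paths in the last sub-case are chordless. The forbidden chords $uy_j$ with $j<i$ are excluded by minimality of $i$; chords $y_jy_k$ with $|j-k|\ge 2$ by the shortest-path property; $y_jw=y_jy_0$ for $j\ge 2$ and $y_jv=y_jy_m$ for $j\le m-2$ again by shortest-pathness; $wv$ by the standing case hypothesis; and all cross-edges between $V(G)$ and $V(\overline{G})$ besides the matching pairs do not exist in $\GG$. Once these routine verifications are complete, every vertex of $\GG$ lies in $J_{\GG}[u,v]$, which finishes the proof.
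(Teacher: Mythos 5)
Your proof is correct, and it follows the same overall strategy as the paper: show that a suitably distant pair $u,v\in V(G)$ is already a monophonic set of $\GG$ by threading each vertex $w$ and its partner $\overline{w}$ onto an induced $u,v$-path that hops between $G$ and $\overline{G}$. The difference lies in the choice of the pair and in how the ``far'' vertices are handled. The paper fixes $u,v$ with $d_G(u,v)=3$ exactly, covers $N_G(u)$ and $N_G(v)$ by the five-vertex paths $u,x,\overline{x},\overline{v},v$, and then disposes of \emph{every} remaining vertex $z$ with a single uniform construction: a shortest path from $z$ to the nearer of $u,v$, extended by $z,\overline{z},\overline{w}',w'$ at the $z$-end; the absence of chords follows in one stroke from the minimality of $d_G(z,w)$ together with $N_G(u)\cap N_G(v)=\emptyset$. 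You instead take a diametral pair and stratify by $d_G(w,\{u,v\})$, which forces a finer case analysis in the $d\geq 3$ stratum (splitting on whether $u$ has a neighbour on the chosen shortest $w,v$-path and, if so, rerouting through the first such neighbour $y_i$). Both arguments are sound; the paper's buys brevity in the hard case at the price of a short verification that $w'$ has no neighbour on $P$, while yours is more elementary but needs the extra bookkeeping you correctly flag (minimality of $i$, shortest-path chordlessness, and implicitly that $u$ does not occur on the used segment of the $w,v$-path). One small point worth making explicit in your write-up: the subcase where $y_i$ exists uses the edge $\overline{w}\,\overline{v}$, which is legitimate precisely because $d_G(v,w)\geq 3$ gives $wv\notin E(G)$.
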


\begin{proof}
Let $u,v \in V(G)$ such that $d_G(u,v) = 3$ and $S = \{u,v\}$. Since $N_G(u) \cap N_G(v) = \emptyset$, we have that, for every $x \in N_G(u)$, $\overline{x} \in N_{\overline{G}}(\overline{v})$. Then, $u,x,\overline{x},\overline{v},v$ is a monophonic path and $x,\overline{x} \in J_{\GG}[u,v]$. Similarly, for every $y \in N_G(v)$, it holds that $v,y,\overline{y},\overline{u},u$ is a monophonic path and $y,\overline{y} \in J_{\GG}[u,v]$.

Next, we show that for every $z \in V(G) \setminus (N_G(u) \cup N_G(v))$, the vertices $z, \overline{z}$ lie in an $u,v$-monophonic path. For that, we choose $w \in \{u,v\}$ such that $d_G(z,w)$ is minimum and let $P$ be a shortest $z,w$-path in $G$. Let $w' \in \{u,v\} \setminus \{w\}$. We show that $P$ together with the edges $\{z\overline{z},\bar{z}\bar{w}',\overline{w}'w'\}$ is an $u,v$-monophonic path in $\GG$. 

Given that $P$ is a shortest path in $G$, as well as in $\GG$, it holds that $P$ is a monophonic path in $\GG$. Notice that $P$ with $\{z\overline{z},\bar{z}\bar{w}'\}$ is also a monophonic path in $\GG$, since $\bar{z} \notin N_{\GG}(P \setminus \{z\})$ and $\bar{w}' \notin N_{\GG}(P)$. Recall that $w' \notin N_{\GG}(\overline{z})$, then, remains to show that $w' \notin  N_{\GG}(P)$. That conclusion holds since $N_G(u) \cap N_G(v) = \emptyset$ and $P$ is a shortest $z,w$-path in $G$ implies that $V(P) \cap N_G(w') = \emptyset$. 

Hence, $z, \overline{z} \in J_{\GG}[u,v]$ and $S$ is a monophonic set of $\GG$.
\qed
\end{proof}

Recall that Lemma~\ref{lemma:monophonic_diam3} treats the case  $\diam(G) \geq 3$ or $\diam(\overline{G}) \geq 3$. Then, from now on, we consider $\diam(G) = \diam(\overline{G}) = 2$. With such a restriction, the following remark will be useful.

\begin{remark}[\cite{chellaram-2019}]\label{rmk:diam2}
Let $G$ be a graph. If $\diam(G) = \diam(\overline{G}) = 2$, then for every edge $uv \in E(G)$, $ecc_G(u) = ecc_G(v) = 2$ and $V(G) \setminus (N_G(u) \cup N_G(v) ) \neq \emptyset$.
\end{remark}

\begin{lemma}\label{lemma:monophonic_c5}
Let $G$ be a graph. If $G = C_5$, then  $\m(\GG)= 3$.
\end{lemma}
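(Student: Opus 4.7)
My approach is to establish the two inequalities $\m(\GG) \leq 3$ and $\m(\GG) \geq 3$ separately. Label the vertices of $C_5$ as $v_1, \ldots, v_5$ with $E(C_5) = \{v_iv_{i+1} : i \in [5]\}$ (indices mod $5$); recall that $\GG$ is the Petersen graph. For the upper bound, I would show that $S = \{v_1, v_3, \overline{v_5}\}$ is a monophonic set of $\GG$. The induced $v_1,v_3$-paths $v_1,v_2,v_3$; $v_1,v_5,v_4,v_3$; $v_1,\overline{v_1},\overline{v_3},v_3$; $v_1,\overline{v_1},\overline{v_4},v_4,v_3$; and $v_1,v_5,\overline{v_5},\overline{v_3},v_3$ together place every vertex except $\overline{v_2}$ into $J_{\GG}[v_1,v_3]$, while the induced $v_1,\overline{v_5}$-path $v_1,v_2,\overline{v_2},\overline{v_5}$ contributes $\overline{v_2}$.

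For the lower bound, I would suppose $\{u,v\}$ is a monophonic set of $\GG$ and derive a contradiction. If $uv \in E(\GG)$, the edge $uv$ is a chord of every $u,v$-path of length at least $2$, whence $J_{\GG}[u,v] = \{u,v\} \neq V(\GG)$. If $u$ and $v$ are non-adjacent, the distance-transitivity of the Petersen graph lets us assume $\{u,v\} = \{v_1, v_3\}$, and it suffices to show $\overline{v_2} \notin J_{\GG}[v_1, v_3]$. Suppose, for contradiction, that an induced $v_1,v_3$-path $P$ contains $\overline{v_2}$. Because $N_{\GG}(\overline{v_2}) = \{v_2, \overline{v_4}, \overline{v_5}\}$, the two neighbors of $\overline{v_2}$ on $P$ lie in this set. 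If one of them is $v_2$, then since $N_{\GG}(v_2) = \{v_1, v_3, \overline{v_2}\}$, the other neighbor of $v_2$ on $P$ is forced to be $v_1$ or $v_3$, producing $v_1v_2$ or $v_2v_3$ as a chord. Otherwise, the neighbors of $\overline{v_2}$ on $P$ are $\overline{v_4}$ and $\overline{v_5}$, and $P$ contains the sub-path $\overline{v_4},\overline{v_2},\overline{v_5}$; the extension from $\overline{v_4}$ toward $v_1$ must use $\overline{v_1}$ or $v_4$, and from $\overline{v_5}$ toward $v_3$ must use $\overline{v_3}$ or $v_5$. The four combinations force respectively the chords $\overline{v_1}\,\overline{v_3}$ (an edge of $\overline{G}$), $v_4v_5$ (an edge of $G$), $\overline{v_4}v_4$ (a matching edge), and $v_5\overline{v_5}$ (a matching edge), contradicting the induced property of $P$.

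The main obstacle is the final four-way case analysis: for each combination, the $3$-regularity of $\GG$ together with the already-used vertices leaves essentially a unique extension of the sub-path $\overline{v_4},\overline{v_2},\overline{v_5}$ to the endpoints $v_1$ and $v_3$, and one must carefully identify the forced chord in each instance. Aside from this enumeration, the proof relies only on the definition of a monophonic set and the high symmetry of the Petersen graph; distance-transitivity is what prevents us from having to inspect the many orbits of non-adjacent pairs (both in $V(G)$, both in $V(\overline{G})$, or mixed) individually.
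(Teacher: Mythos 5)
Your proof is correct, and the lower bound is handled by a genuinely different (and shorter) route than the paper's. For the upper bound both arguments exhibit a three-element monophonic set consisting of two vertices of $G$ at distance $2$ plus one vertex of $\overline{G}$; the paper takes $\{u_1,u_4,\overline{u}_5\}$ and dispatches most of the verification by citing Proposition~\ref{prop:monophonic_p4} applied to the induced path $u_1,u_2,u_3,u_4$, whereas you take $\{v_1,v_3,\overline{v_5}\}$ and list the six induced paths explicitly --- same idea, slightly more hand computation. The real divergence is in the lower bound: the paper splits the nonadjacent pair $\{x,y\}$ into two cases (both in $V(G)$; one in $V(G)$ and one in $V(\overline{G})$, with the case both in $V(\overline{G})$ implicitly absorbed by $\overline{C_5}\cong C_5$) and analyzes the candidate induced paths in each, while you invoke distance-transitivity of the Petersen graph to collapse everything to the single pair $\{v_1,v_3\}$ and then run a forcing argument organized around $N_{\GG}(\overline{v_2})$. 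This buys a cleaner, single-case analysis at the price of importing a nontrivial (though standard) symmetry fact that the paper's self-contained argument avoids. Two small points you should make explicit in a final write-up: (i) your phrase ``the extension from $\overline{v_4}$ toward $v_1$ \dots and from $\overline{v_5}$ toward $v_3$'' silently fixes which side of $\overline{v_2}$ faces which endpoint; this is legitimate, but only because the reflection of $C_5$ fixing $v_2$ extends to an automorphism of $\GG$ swapping $v_1\leftrightarrow v_3$, $v_4\leftrightarrow v_5$, $\overline{v_4}\leftrightarrow\overline{v_5}$, $\overline{v_1}\leftrightarrow\overline{v_3}$, and without that observation the listed chords (e.g.\ $v_4\overline{v_4}$ in the third combination) are not the ones that arise in the mirrored orientation; (ii) in the case where $v_2$ lies on $P$, it is worth saying explicitly that the chord is the edge from $v_2$ to whichever of $v_1,v_3$ is \emph{not} its $P$-neighbor, since both endpoints lie on $P$. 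Neither point is a gap --- the four combinations do each force the chord you name once the orientation is fixed --- but they are where a referee would push.
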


\begin{proof}
Let $G$ be a cycle graph on $5$ vertices, with $V(G) = \{u_1, u_2, \dots, u_5\}$ and $E(G) = \{u_iu_{i+1} : 1 \leq i \leq 4\} \cup \{ u_5u_1 \}$. Consider $S = \{u_1,u_4, \overline{u}_5 \}$. 
Since $u_1, u_5, u_4$ is an $m$-path in $G$, $\overline{u}_5 \in S$, and by Proposition~\ref{prop:monophonic_p4}, for every $i \in [4]$, $u_i, \overline{u}_i \in J_{\GG}[u_1,u_4]$, we conclude that $S$ is a monophonic set of $\GG$.

Next, we show that $\m(\GG) \geq 3$. Suppose, by contradiction, that $\m(\GG) = 2$. Let $S = \{ x,y \}$ be a monophonic set of $\GG$. Since $|V(G)| > 1$, we have that $xy \notin E(\GG)$. We consider two cases: \emph{(i)} $x,y \in V(G)$ and \emph{(ii)} $x \in V(G)$ and $y \in V(\overline{G})$.

\emph{(i)} Since for every pair of nonadjacent vertices $x,y \in V(G)$, $d_G(x,y) = 2$, we may assume w.l.o.g. that $x = u_1$ and $y = u_4$. Since $S$ is an $m$-set of $\GG$, $\overline{u}_5$ lies in a monophonic path $P: u_1, \dots, \overline{u}_5, \dots, u_4$. It is clear that $u_5 \notin V(P)$, otherwise an edge from $\{u_1,u_5, u_4u_5, u_5\overline{u}_5\}$ would be a chord. This implies that $\overline{u}_2\overline{u}_5, \overline{u}_3\overline{u}_5 \in E(P)$. If $u_2 \in V(P)$, then $u_1u_2, u_2\overline{u}_2 \in E(P)$. Then, $u_3 \notin V(P)$, consequently $u_4\overline{u}_4 \in E(P)$. But $\overline{u}_2\overline{u}_4$ is a chord in $P$, a contradiction. So, consider that $u_2 \notin V(P)$. Then, $u_1\overline{u}_1, \overline{u}_1\overline{u}_3 \in E(P)$. This implies that $\overline{u}_4 \notin V(P)$, consequently $u_3 \in V(P)$ as $u_4u_3, u_3\overline{u}_3 \in E(P)$. But $\overline{u}_1\overline{u}_3$ is a chord in $P$, contradiction. Hence, $\overline{u}_5 \notin J_{\GG}[u_1,u_4]$, a contradiction.

\emph{(ii)} Let $x \in V(G)$ and $y \in V(\overline{G})$ with $xy \notin E(G)$. Since $d_{\GG}(x,y) = 2$, we may assume w.l.o.g. that $x = u_1$ and $y = \overline{u}_2$. Since $S$ is an $m$-set of $\GG$, $u_3$ lies in a monophonic path $P: u_1, \dots, u_3, \dots, u_4$. Notice that $u_2 \notin V(P)$, otherwise an edge from $\{u_1,u_2, u_2u_3, u_2\overline{u}_2\}$ would be a chord. Then $u_3u_4, u_3\overline{u}_3 \in E(P)$. If $u_5 \in V(P)$, then $u_1u_5, u_4u_5 \in E(P)$. Consequently $\overline{u}_5 \notin V(P)$ and $\overline{u}_2\overline{u}_4 \in E(P)$. But in this case, $u_4\overline{u}_4$ is a chord of $P$, a contradiction. Then, consider $u_5 \notin V(P)$. This implies that $u_1\overline{u}_1 \in E(P)$, but $\overline{u}_1\overline{u}_3$ is a chord of $P$, also a contradiction. Therefore, $\m(\GG) \geq 3$. \qed
\end{proof}

\begin{lemma}
\label{lemma:monophonic_diam2}
Let $G$ be a graph such that $\diam(G) = \diam(\overline{G}) = 2$. If $G \neq C_5$, then $\m(\GG)= 2$.
\end{lemma}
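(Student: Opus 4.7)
Since $\diam(G) = \diam(\overline{G}) = 2$, no vertex of $G$ is universal or isolated, and a short degree-count analysis (each vertex must have degree in $\{2,3\}$ when $n=5$) shows that $C_5$ is the unique $5$-vertex graph satisfying both diameter conditions simultaneously. Hence $G \neq C_5$ forces $n(G) \geq 6$. Furthermore, if $G$ were $P_4$-free it would be a cograph, in which case one of $G, \overline{G}$ would be disconnected, contradicting the hypothesis. So $G$ must contain an induced path on four vertices, say $P: a, b, c, d$.

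My plan is to take $S = \{a, d\}$ and prove $J_{\GG}[S] = V(\GG)$. Because $ad \notin E(G)$ we have $\overline{a}\overline{d} \in E(\overline{G})$, making $a, \overline{a}, \overline{d}, d$ an induced $a,d$-path in $\GG$; together with Proposition~\ref{prop:monophonic_p4} applied to $P$, this gives $\{a,b,c,d,\overline{a},\overline{b},\overline{c},\overline{d}\} \subseteq J_{\GG}[a,d]$. For any remaining $x \in V(G)$, I would classify by adjacency with $\{a,d\}$: if $x \in N(a) \setminus N(d)$, the induced path $a, x, \overline{x}, \overline{d}, d$ captures $x$ and $\overline{x}$; the case $x \in N(d) \setminus N(a)$ is symmetric via $d, x, \overline{x}, \overline{a}, a$; and for $x \in N(a) \cap N(d)$ the path $a, x, d$ already captures $x$.

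The remaining cases---the partner $\overline{x}$ of a common neighbor of $a$ and $d$, and both $x$ and $\overline{x}$ when $x \notin N[a] \cup N[d]$---are to be handled by longer induced paths that exploit the $P_4$ template, typically of the form $a, b, \overline{b}, \overline{x}, \overline{c}, c, d$ or a variant routed through a common neighbor of $x$ with $a$ or $d$. When a particular template fails to cover some $x$, my plan is either to switch to a different induced $P_4$ in $G$ (there will be several when $G$ is sufficiently rich), or to replace $S$ by a mixed pair such as $\{a, \overline{d}\}$, whose chord constraints are dual and therefore complementary to the original ones.

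The main obstacle is that every induced $a,d$-path in $\GG$ must simultaneously avoid two persistent chords: the $G$-edge $bc$ (preventing $b$ and $c$ from appearing non-consecutively on the $G$-side of the path) and the $\overline{G}$-edge $\overline{a}\overline{d}$, present because $ad \notin E(G)$ (preventing $\overline{a}$ and $\overline{d}$ from appearing non-consecutively on the $\overline{G}$-side). These twin restrictions are precisely what defeats the statement in $C_5$, handled separately in Lemma~\ref{lemma:monophonic_c5}; the hypothesis $n(G) \geq 6$ with $G \neq C_5$ should provide the structural variety---through multiple induced $P_4$'s, through common neighbors available in a diameter-$2$ graph, or through the option of mixed endpoint pairs---needed to complete the case analysis without obstruction.
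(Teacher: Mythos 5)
There is a genuine gap. Your setup (ruling out $n(G)\le 5$, extracting an induced $P_4$ $a,b,c,d$ from the fact that $G$ is not a cograph, and covering $V(P)\cup V(\overline{P})$ and the private neighbours of $a$ and $d$) is fine as far as it goes, but the proof stops exactly at the hard cases: the partner $\overline{x}$ of a common neighbour $x\in N(a)\cap N(d)$, and the pairs $x,\overline{x}$ with $x\notin N[a]\cup N[d]$. Your proposed template $a,b,\overline{b},\overline{x},\overline{c},c,d$ contains the chord $bc$ (both $b$ and $c$ lie on it non-consecutively and $bc\in E(G)$), so it is not a monophonic path --- you identify this ``twin chord'' obstruction yourself, but the closing paragraph only asserts that switching to another induced $P_4$ or to a mixed pair ``should'' work. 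That is a plan, not an argument: nothing guarantees that some choice of endpoints of the form $\{a,d\}$ or $\{a,\overline{d}\}$ succeeds for every diameter-$2$ graph other than $C_5$, and no uniform construction is given for the leftover vertices.

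The paper resolves precisely this difficulty by anchoring $S$ on an \emph{edge} rather than a non-edge: it takes $S=\{u,\overline{v}\}$ with $uv\in E(G)$, after first splitting into structural cases (an edge $uv$ with $|N_G(u)\setminus N_G(v)|\ge 2$; a triangle, which reduces to the previous case; and the residual case, which is shown to force $G=C_5$). With mixed endpoints $u,\overline{v}$ and $uv\in E(G)$, the chord $\overline{u}\,\overline{v}$ does not exist in $\overline{G}$, and common neighbours $y\in N_G(u)\cap N_G(v)$ are reached via $u,y,\overline{y},\overline{w},\overline{v}$ (or $u,y,\overline{y},\overline{w},\overline{w}',\overline{v}$) using Remark~\ref{rmk:diam2} to supply $w\in V(G)\setminus(N_G(u)\cup N_G(y))$. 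To repair your proof you would need either to adopt such a mixed pair and carry out the case analysis explicitly, or to prove that one of your fallback choices always works --- neither is done in the proposal.
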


\begin{proof}
Let $G \neq C_5$ be graph with $\diam(G) = \diam(\overline{G}) = 2$. We present two claims for the proof.

\smallskip
\noindent\textbf{Claim 1:} \emph{If there exists $uv \in E(G)$ such that $|N_G(u) \setminus N_G(v)|\geq 2$, then $\m(\GG)= 2$.}

\smallskip
Let $uv \in E(G)$ such that $N_G(u) \setminus N_G(v) = \{u_1,u_2\}$ and $S = \{u,\overline{v}\}$.
Given that $u,v,\overline{v}$ is an $m$-path, the conclusion that $v \in J_{\GG}[S]$ is immediate. By Remark~\ref{rmk:diam2}, we know that there exists $x \in V(G) \setminus (N_G(u) \cup N_G(v))$. Then, $\overline{u},\overline{v} \in N_{\overline{G}}(\overline{x})$. This implies that $u,\overline{u},\overline{x},\overline{v}$ is an $m$-path and $\overline{u} \in J_{\GG}[S]$. 

Recall that $u_1,u_2 \notin N_G(v)$. Then $\overline{u}_1, \overline{u}_2 \in N_{\overline{G}}(\overline{v})$. Consequently, $u, u_i, \overline{u}_i, \overline{v}$, for every $i \in \{1,2\}$, is a monophonic path in $\GG$ and $u_i,\overline{u}_i \in J_{\GG}[S]$. This in turn implies that $(N_G(u) \setminus N_G(v)) \cup (\overline{N_G(u) \setminus N_G(v)}) \subseteq J_{\GG}[S]$.

Now, let $v_1 \in N_G(v) \setminus N_G(u)$. Notice that 
there is no monophonic $u,v_1$-path containing both $u_1, u_2$, otherwise either $uu_1$ or $uu_2$ would be a chord. In addition, since $d_G(v_1,u_i) \leq 2$, for every $i \in \{1,2\}$, a shortest $v_1, u_i$-path  in $G$ does not contain $v$. Then, we select $i \in \{1,2\}$ such that $v_1, \dots, u_i$ is a shortest path in $G$ of minimum length.
Thus, $u, u_i, \dots, v_1, \overline{v}_1, \overline{u}_{3-i}, \overline{v}$ is a monophonic $u, \overline{v}$-path in $\GG$. Consequently $v_1, \overline{v}_1 \in J_{\GG}[S]$. Furthermore, this implies that $(N_G(v) \setminus N_G(u)) \cup (\overline{N_G(v) \setminus N_G(u)}) \subseteq J_{\GG}[S]$.
 
Next, let $y \in N_G(u) \cap N_G(v)$. By Remark~\ref{rmk:diam2}, there exists $w \in V(G) \setminus (N_G(u) \cup N_G(y))$. If $w \notin N_G(v)$, then $\overline{w} \in N_{\overline{G}}(\overline{v})$, consequently $u, y, \overline{y}, \overline{w}, \overline{v}$ is an $m$-path in $\GG$. Otherwise, $vw$ being an edge in $G$, Remark~\ref{rmk:diam2} implies that there exists $w' \in V(G) \setminus (N_G(v) \cup N_G(w))$. Thus, $\overline{v},\overline{w} \in N_{\overline{G}}(\overline{w}')$. Hence, $u, y, \overline{y}, \overline{w},\overline{w}', \overline{v}$ is an $m$-path in $\GG$.
In both cases, we conclude that $y,\overline{y} \in J_{\GG}[S]$, which implies that $(N_G(u) \cap N_G(v)) \cup (\overline{N_G(u) \cap N_G(v)}) \subseteq J_{\GG}[S]$.

Finally, let $x \in V(G) \setminus (N_G(u) \cup N_G(v))$. Since $d_G(u,x) = 2$, there exists $u' \in N_G(x)$ such that $u'x \in E(G)$. Then, $u,u', x, \overline{x}, \overline{v}$ is an $m$-path in $\GG$. Thus $x, \overline{x} \in J_{\GG}[S]$ and $(V(G) \setminus (N_G(u) \cup N_G(v))) \cup (\overline{V(G) \setminus (N_G(u) \cup N_G(v))}) \subseteq J_{\GG}[S]$.

By the above, $S = \{u,\overline{v}\}$ is an $m$-set of $\GG$, therefore $\m(\GG) = 2$.

\medskip
Recall that Claim~1 considers the existence of $uv \in E(G)$ such that $|N_G(u) \setminus N_G(v)|\geq 2$. Then, for the rest of the proof, we consider that there is no such an edge, i.e., for every $uv \in E(G)$, $|N_G(u) \setminus N_G(v)| \leq 1$. We proceed with Claim~2.

\smallskip
\noindent\textbf{Claim 2:} \emph{If there exists a clique of order $3$ in $G$, then $\m(\GG)= 2$.}

\smallskip

Let $C = \{a,b,c\}$ be a clique in $G$. Since $ab \in E(G)$, Remark~\ref{rmk:diam2}, implies that there exists $x \in V(G) \setminus (N_G(a) \cup N_G(b))$. Since $\diam(G) = 2$, there exists $y \in N_G(a) \cup N_G(b)$ such that $xy \in E(G)$. 

Recall that $ax,bx \notin E(G)$. If $y = c$, then $cx$ is an edge with $|N_G(c) \setminus N_G(x)|\geq 2$ and the conclusion follows by Claim~1.
Otherwise, w.l.o.g., suppose that $y \in N_G(a) \setminus N_G(b)$. If $cy \notin E(G)$, given that $by \notin E(G)$, we have that $ay$ is an edge with $|N_G(a) \setminus N_G(y)|\geq 2$ and, again, the conclusion follows by Claim~1. So, suppose that $cy \in E(G)$. Since $ax,cx \notin E(G)$, $yx$ is an edge with $|N_G(y) \setminus N_G(x)|\geq 2$. Once applying Claim~1, the conclusion holds.

\medskip

By Claim~2, we may assume that $G$ contains no $K_3$ as an induced subgraph, that is, $G$ is \emph{triangle-free}. Recall that for every $uv \in E(G)$, $|N_G(u) \setminus N_G(v)| \leq 1$. If $G$ is triangle-free and there exists $uv \in E(G)$, with $|N_G(u) \setminus N_G(v)| = 0$, then $deg_G(u) = 1$. Since $\diam(G) = 2$, for every $x \in V(G) \setminus \{u,v\}$, $d_G(u,x) = 2$. But this implies that $N_G(u) \cup N_G(v) = V(G)$, which contradicts Remark~\ref{rmk:diam2}. Then $G$ has no pendant vertices. Finally, since for every $uv \in E(G)$, $|N_G(u) \setminus N_G(v)| = 1$, $\diam(G) = \diam(\overline{G)} = 2$ implies that $G = C_5$, which was proved in Lemma~\ref{lemma:monophonic_c5}.
\qed \end{proof}

Putting everything together, we close this section with Theorem~\ref{thm:monophonic_connected}.

\begin{theorem}
\label{thm:monophonic_connected}
Let $G$ be a connected graph such that $\overline{G}$ is connected, then  
$$\m(\GG)=
\begin{cases}
 3 \hspace{0.4cm}; $ if $G = C_5 \,, \\ 
 2\hspace{0.4cm}; $ otherwise$\,. 
\end{cases}$$
\end{theorem}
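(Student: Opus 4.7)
The plan is to deduce this theorem by straightforward case analysis on the diameters of $G$ and $\overline{G}$, invoking the three preceding lemmas (Lemma~\ref{lemma:monophonic_diam3}, Lemma~\ref{lemma:monophonic_c5}, and Lemma~\ref{lemma:monophonic_diam2}) together with the obvious lower bound $\m(\GG)\geq 2$ that holds because $n(\GG)\geq 8$ rules out monophonic sets of size one.

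I would split into three cases. First, if $\diam(G)\geq 3$, Lemma~\ref{lemma:monophonic_diam3} directly yields $\m(\GG)=2$. Second, if $\diam(\overline{G})\geq 3$, I would apply Lemma~\ref{lemma:monophonic_diam3} to $\overline{G}$ in place of $G$; this is legitimate because $\overline{\overline{G}}=G$ is connected by hypothesis and the complementary prism construction is symmetric in the sense that $\overline{G}\,\overline{\overline{G}}\cong\GG$, so the same conclusion $\m(\GG)=2$ follows. Third, if $\diam(G)=\diam(\overline{G})=2$, I would split according to whether $G=C_5$: if so, Lemma~\ref{lemma:monophonic_c5} gives $\m(\GG)=3$; if not, Lemma~\ref{lemma:monophonic_diam2} gives $\m(\GG)=2$.

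A small point worth recording for cleanliness is that $C_5$ is self-complementary, so the condition $G=C_5$ is equivalent to $\overline{G}=C_5$; this ensures that the three cases cover all possibilities without overlap and without the need to separately handle $\overline{G}=C_5$. Since every step is a direct citation to an established lemma and the trivial lower bound, I do not anticipate any real obstacle; the only subtlety is remembering to invoke the symmetry $\GG\cong\overline{G}\,\overline{\overline{G}}$ so that the diameter-three lemma is available on either side.
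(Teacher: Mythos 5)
Your proposal is correct and follows essentially the same route as the paper, whose proof of this theorem is simply a citation of Lemmas~\ref{lemma:monophonic_diam3}, \ref{lemma:monophonic_c5}, and~\ref{lemma:monophonic_diam2}; you merely make explicit the case split on diameters, the symmetry $\GG\cong\overline{G}\,\overline{\overline{G}}$ (which the paper also invokes, in the remark preceding Lemma~\ref{lemma:monophonic_c5}), and the trivial lower bound $\m(\GG)\geq 2$.
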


\begin{proof}
The conclusion follows by Lemmas~\ref{lemma:monophonic_diam3},~\ref{lemma:monophonic_c5}, and~\ref{lemma:monophonic_diam2}.
\qed \end{proof}

\section{Monophonic hull number}
\label{hull}
In this section, we determine the monophonic hull number of complementary prisms. As mentioned in Introduction, in~\cite{dourado-2010} the authors prove that the monophonic hull number of an arbitrary graph can be computed in polynomial time. Although they presented an $O(n^3m)$-time algorithm for computing such a parameter on a graph $G$ with $n$ vertices and $m$ edges, we present here closed formulas for $\h_m(\GG)$ that depend solely on the number of trivial components of $G$. This leads to a linear time algorithm for computing the hull number of complementary prisms.

\begin{theorem}
\label{thm:hull-disconnected}
Let $G$ be a disconnected graph with $t$ trivial components. Then
 $$\h_m (\GG)=
\begin{cases}
 2 \hspace{1cm}; $ if $t=0 \,, \\ 
 t+1\hspace{0.4cm}; $ otherwise$\,. 
\end{cases}$$
\end{theorem}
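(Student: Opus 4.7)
The plan is to treat the cases $t=0$ and $t \geq 1$ separately. For $t=0$, since $G$ is disconnected with every component non-trivial, I would fix distinct components $G_1, G_2$ of $G$, pick $u \in V(G_1)$ and $w \in V(G_2)$, and take $S = \{\overline{u}, w\}$. The first step is to show $V(G_2) \cup \overline{V(G_2)} \cup \{\overline{u}\} \subseteq J[S]$: for every $x \in V(G_2)$ and every monophonic $x,w$-path $x = x_0, x_1, \dots, x_m = w$ in $G_2$, the extension $\overline{u}, \overline{x}, x, x_1, \dots, x_m$ is monophonic in $\GG$ because $\overline{u}\,\overline{x} \in E(\overline{G})$ (as $ux \notin E(G)$) and the matching structure precludes any cross-chord. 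A second round uses a pair $w, \overline{w'}$ with $w' \in N_{G_2}(w)$; the edge $ww' \in E(G_2)$ kills the would-be chord $\overline{w}\,\overline{w'}$, so each path $w, \overline{w}, \overline{y}, \overline{w'}$ is monophonic for $y \in V(G) \setminus V(G_2)$, placing $\overline{V(G)} \subseteq J^2[S]$. A third round closes things: for each edge $xy \in E(G)$, the four-vertex path $\overline{x}, x, y, \overline{y}$ is monophonic (the chord $\overline{x}\,\overline{y}$ is absent because $xy \in E(G)$), so $V(G) \subseteq J^3[S]$ and $[S] = V(\GG)$. The lower bound $\h_m(\GG) \geq 2$ is immediate.

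For $t \geq 1$, let $T$ denote the set of isolated vertices of $G$. Each $v \in T$ is a pendant of $\GG$ (its unique $\GG$-neighbor is $\overline{v}$), hence simplicial, so Remark~\ref{rmk:simplicial} forces $T \subseteq S$ for every $m$-hull set $S$. To upgrade this to $\h_m(\GG) \geq t + 1$, I would prove that $T \cup \overline{T}$ is $m$-convex in $\GG$: the partners $\{\overline{v} : v \in T\}$ induce a clique in $\overline{G}$, and each pendant $v \in T$ has its only $\GG$-neighbor in $\overline{T}$; consequently, any monophonic path between two vertices of $T \cup \overline{T}$ that detours through a vertex outside this set is shortcut by an edge $\overline{v}\,\overline{v'}$. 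Hence $[T] = T \cup \overline{T}$, which is a proper subset of $V(\GG)$ whenever $G$ has at least one non-trivial component, yielding $\h_m(\GG) \geq t + 1$.

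For the matching upper bound, I would choose a non-trivial component $G_1$, a vertex $u \in V(G_1)$, and set $S = T \cup \{u\}$. For every $x \in V(G_1)$ and any monophonic $u,x$-path $u = u_0, u_1, \dots, u_k = x$ in $G_1$, the path $u_0, u_1, \dots, u_k, \overline{u_k}, \overline{v_i}, v_i$ (any $v_i \in T$) is monophonic in $\GG$, so $V(G_1) \cup \overline{V(G_1)} \subseteq J[S]$. Picking $u' \in N_{G_1}(u)$ so that $uu' \in E(G_1)$ destroys the chord $\overline{u}\,\overline{u'}$, the paths $u, \overline{u}, \overline{y}, \overline{u'}$ are monophonic for every $y \in V(G) \setminus V(G_1)$, giving $\overline{V(G)} \subseteq J^2[S]$. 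Finally, for each remaining non-trivial component $G_k$, an edge $xy \in E(G_k)$ (guaranteed by connectedness) together with the monophonic path $\overline{x}, x, y, \overline{y}$ places $V(G_k) \subseteq J^3[S]$; combining these inclusions gives $[S] = V(\GG)$ and $\h_m(\GG) \leq t + 1$.

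The main technical obstacle throughout is the systematic control of chords in $\GG$: since $\overline{a}\,\overline{b} \in E(\overline{G})$ precisely when $ab \notin E(G)$, almost every $m$-path construction in $\GG$ requires an intermediate vertex whose in-component edge (the $ww'$ in the $t=0$ case, the $uu'$ in the $t \geq 1$ case) kills an otherwise blocking chord $\overline{a}\,\overline{b}$. One should also separately acknowledge the degenerate case $G = \overline{K_n}$, where no non-trivial component exists: there $T$ itself is already an $m$-hull set of size $t$, so that sub-case has to be handled outside of the main argument.
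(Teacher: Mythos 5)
Your proof is correct and follows essentially the same route as the paper's: the same iterated-interval computation for the upper bounds (the paper starts from $\{\overline{u},\overline{v}\}$ with $uv\in E(G_1)$ rather than your $\{\overline{u},w\}$ with $u,w$ in different components, but the chord-killing mechanism via matching edges and in-component edges is identical), and the same lower bound via simplicial vertices plus the observation that the hull of the set $T$ of isolated vertices is contained in $T\cup\overline{T}$. One substantive remark: your closing observation about $G=\overline{K_n}$ is not merely a degenerate sub-case to be handled on the side --- it is a genuine counterexample to the theorem as stated. For $G=\overline{K_n}$ we have $t=n$ and, as you note, $T$ itself is already an $m$-hull set (the unique $v_i,v_j$-monophonic path is $v_i,\overline{v}_i,\overline{v}_j,v_j$), so $\h_m(\GG)=t$ rather than $t+1$; this is also forced by Remark~\ref{rmk:hm_bound} together with Theorem~\ref{thm:monophonic_disconnected}, which give $\h_m(\GG)\le \m(\GG)=r=t$ in that case. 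The paper's proof silently assumes a non-trivial component exists (it writes $n(G_i)\ge 2$ for $t+1\le i\le r$ and uses a vertex $u\in V(G_{t+1})$), so the statement needs either the hypothesis that $G$ has at least one edge or a third case $\h_m(\GG)=t$ when $G$ is edgeless. You caught this; the paper did not.
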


\begin{proof}
Suppose that $G$ has no trivial components. Denote by $G_1, G_2, \dots, G_r$ the components of $G$. Let $S = \{\overline{u},\overline{v}\}$ such that $uv \in E(G_1)$. We show that $S$ is an $m$-hull set of $\GG$. Since $\bar{u} \bar{v} \notin E(\overline{G})$ and, for every $w \notin V(G_1)$, $\bar{u}\bar{w}, \bar{v}\bar{w} \in E(\overline{G})$, we have that $\bar{u}, \bar{w}, \bar{v}$ is a monophonic path in $\overline{G}$. Then $\bigcup_{i = 2}^{r} V(\overline{G}_i)  \subseteq J_{\GG}[\overline{u},\overline{v}]$. Since $G_2$ is nontrivial, there exists $x,y \in V(G_2)$ such that $xy \in E(G)$. By the same argument, we get $V(\overline{G}_1)  \subseteq J_{\GG}[\overline{x},\overline{y}]$. Finally, given that $V(\overline{G}) \subseteq [S]_{\GG}$ and, for every $i \in [r]$, there exist $x,y \in V(G_i)$ such that $xy \in E(G)$, $x,y \in J_{\GG}[\overline{x},\overline{y}]$. By applying Claim~1 of proof of Theorem~\ref{thm:convexity_disconnected} we get that $V(G) \subseteq [S]_{\GG}$.

Now, suppose that $G$ has $t \geq 1$ trivial components. Let $G_1, G_2, \dots, G_r$ denote the components of $G$, where $V(G_i) = \{v_i\}$, for $1 \leq i \leq t$, and $n(G_i) \geq 2$, for $t+1 \leq i \leq r$. Notice that $v_i$ is a simplicial vertex, for every $1 \leq i \leq t$. Then, Remark~\ref{rmk:simplicial} implies that $v_1, v_2, \dots, v_t$ belongs to any $m$-hull set of $\GG$ and $\h_m (\GG) \geq t$. For every $i,j \in [t]$, we know that $\overline{v}_i\overline{v}_j$ is a chord for every $v_i,v_j$-path in $\GG$. Then, $[\{v_1, v_2, \dots, v_t\}]_{\GG} = \{v_i, \overline{v}_i : 1 \leq i \leq t\}$, which implies that $\h_m (\GG) \geq t+1$.

Let $u$ be any vertex in a nontrivial component of $G$, say $G_{t+1}$. We show that $S = \{u, v_1, v_2, \dots, v_t\}$ is an $m$-hull set of $\GG$. We know that $u, \overline{u}, \overline{v}_i, v_i$ (resp. $u', \overline{u}', \overline{v}_i, v_i$) is a monophonic path between $u$ (resp. $u'$, for $u'\in N_G(u)$) and $v_i$, for every $i \in [t]$. Then $(\{v_i, \overline{v}_i : 1 \leq i \leq t\} \cup N_G[u] \cup \overline{N_G[u]}) \subseteq [S]_{\GG}$. Let $u'\in N_G(u)$. Since $\overline{u}\overline{u'} \notin E(G)$, we have that $\bigcup_{i = t+2}^{r} V(\overline{G}_i)  \subseteq J_{\GG}[\overline{u},\overline{u'}]$. Given that $G_{t+2}$ is nontrivial, $V(\overline{G}_{t+1}) \subseteq J_{\GG}[V(\overline{G}_{t+2})]$, which implies that $V(\overline{G}) \subseteq [S]_{\GG}$. Finally, using a similar argument as in the case of $t = 0$, we conclude that $V(G) \subseteq [S]_{\GG}$.
\qed \end{proof}

\begin{theorem}
\label{thm:hull-connected}
Let $G$ be a connected graph such that $\overline{G}$ is connected, then  $\h_m (\GG)= 2$.
\end{theorem}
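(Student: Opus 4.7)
The plan is to split into two cases based on whether $G$ is $C_5$ or not, and in each case establish $\h_m(\GG) \le 2$ (the matching lower bound $\h_m(\GG) \ge 2$ is automatic because $n(\GG) \ge 2 \cdot n(G) \ge 8$, so a single vertex cannot generate all of $V(\GG)$ under the hull operator).

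For $G \neq C_5$, essentially no work is needed. By Theorem~\ref{thm:monophonic_connected}, $\m(\GG) = 2$, and by Remark~\ref{rmk:hm_bound} we have $\h_m(\GG) \le \m(\GG) = 2$. Combined with the trivial lower bound, this gives $\h_m(\GG) = 2$.

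The only substantive case is $G = C_5$, where Theorem~\ref{thm:monophonic_connected} gives only $\m(\GG) = 3$, so the monophonic number does not provide the bound we need. Here I would label $V(G) = \{u_1, \dots, u_5\}$ with $u_iu_{i+1} \in E(G)$ (indices mod $5$) and exhibit a concrete two-vertex hull set, for example $S = \{u_1, u_3\}$. The key observation is that $P: u_1, u_5, u_4, u_3$ is an induced $P_4$ in $G$, so Proposition~\ref{prop:monophonic_p4} immediately puts $V(P) \cup V(\overline{P}) = \{u_1, u_3, u_4, u_5, \overline{u}_1, \overline{u}_3, \overline{u}_4, \overline{u}_5\}$ inside $J_{\GG}[S]$. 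The direct path $u_1, u_2, u_3$ adds $u_2$, so after one application of $J_{\GG}$ the only vertex still missing is $\overline{u}_2$.

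To pull $\overline{u}_2$ into the hull, I would apply $J_{\GG}$ again, using vertices that the first step has already put into the set. In $\overline{C}_5$ one has $\overline{u}_4\overline{u}_2, \overline{u}_2\overline{u}_5 \in E(\overline{G})$ while $\overline{u}_4\overline{u}_5 \notin E(\overline{G})$, so $\overline{u}_4, \overline{u}_2, \overline{u}_5$ is an induced path in $\GG$ between two vertices of $J_{\GG}[S]$; hence $\overline{u}_2 \in J_{\GG}^{2}[S]$. Therefore $[S]_{\GG} = V(\GG)$ and $\h_m(\GG) \le 2$, completing the proof. The only mildly delicate point is verifying that the candidate paths really are chordless; this is a short enumeration in the Petersen graph and is the only step that requires any care.
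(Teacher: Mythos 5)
Your proposal is correct and follows essentially the same route as the paper: the case $G\neq C_5$ is dispatched via $\h_m(\GG)\le\m(\GG)=2$, and for $G=C_5$ you exhibit a two-vertex set at distance two on the cycle, apply Proposition~\ref{prop:monophonic_p4} to the induced $P_4$ between them, pick up the fifth cycle vertex directly, and recover the last vertex of $\overline{G}$ with a second application of $J_{\GG}$. The paper uses $\{u_1,u_4\}$ where you use $\{u_1,u_3\}$, but these are equivalent under the symmetry of $C_5$, and all your candidate paths are indeed chordless.
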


\begin{proof}
Since $\h_m(\GG)$ is upper bounded by $\m(\GG)$ (cf. Remark~\ref{rmk:hm_bound}), when $G \neq C_5$,  Theorem~\ref{thm:monophonic_connected} imples that $\h_m (\GG)= 2$. Then, it is enough to prove that $\h_m (C_5\overline{C}_5)= 2$. 

Let $G$ be a cycle graph on $5$ vertices, with $V(G) = \{u_1, u_2, \dots, u_5\}$ and $E(G) = \{u_iu_{i+1} : 1 \leq i \leq 4\} \cup \{ u_5u_1 \}$. Consider $S = \{u_1,u_4\}$. 
Proposition~\ref{prop:monophonic_p4} implies that, for every $i \in [4]$, $u_i, \overline{u}_i  \in J_{\GG}[S] \subseteq [S]_{\GG}$. Since $u_1, u_5, u_4$ is an $m$-path in $G$, $u_5 \in J_{\GG}[S] \subseteq [S]_{\GG}$. Finally, $\overline{u}_2, \overline{u}_5, \overline{u}_3$ is an $m$-path in $\GG$, then $\overline{u}_5 \in J_{\GG}[\overline{u}_2, \overline{u}_3] \subseteq J_{\GG}^2[S] \subseteq [S]_{\GG}$. Therefore $S$ is an $m$-hull set of $\GG$ and $\h_m(\GG)= 2$.
\qed
\end{proof}



\end{document}